\documentclass[12pt]{article}
\usepackage[english,activeacute]{babel}
\usepackage{amsmath,amsfonts,amssymb,amstext,amsthm,amscd,mathrsfs,amsbsy}


\newtheorem{teo}{Theorem}[section]
\newtheorem{pro}[teo]{Proposition}
\newtheorem{coro}[teo]{Corollary}
\newtheorem{lem}[teo]{Lemma}

\theoremstyle{definition}
\newtheorem{defi}[teo]{Definition}
\newtheorem{exam}[teo]{Example}
\newtheorem{rem}[teo]{Remark}

\newcommand{\N}{\mathbb N}
\newcommand{\ZZ}{\mathbb{Z}}
\newcommand{\V}{\mathbf V}
\newcommand{\K}{\mathbb K}
\newcommand{\Po}{\K[x_1,\ldots,x_r]}
\newcommand{\mm}{\mathfrak{m}}
\newcommand{\lb}{\lambda}
\newcommand{\gr}{X_A}
\newcommand{\Sn}{S^{(n)}}
\newcommand{\Snu}{S^{(n+1)}}

\newcommand{\lag}{\langle}
\newcommand{\rag}{\rangle}
\DeclareMathOperator{\ri}{ri_{\underline{0}}}


\begin{document}

\title{On the multiplicity and regularity index of toric curves}

\author{Daniel Duarte\footnote{Research supported by CONACyT grant 287622.}, Alondra Ram\'irez-Sandoval}

\maketitle

\begin{abstract}
In this note we revisit the problem of determining combinatorially the multiplicity at the origin of a toric curve.
In addition, we give the exact value of the regularity index of that point for plane toric curves and effective 
bounds for this number for arbitrary toric curves.
\end{abstract}


\section*{Introduction}

A classical numerical invariant associated to a point of an algebraic variety is the (Hilbert-Samuel) multiplicity. This invariant has 
numerous applications on algebraic geometry and commutative algebra (for instance, it plays a fundamental role in the problem of 
resolution of singularities). On the other hand, it is known that this invariant can be computed using the theory of Gr\"obner bases 
(see \cite[Chapter 5]{GP}).

In the context of toric varieties the computation of this invariant is simpler. Assuming that an affine toric variety contains the origin, 
its multiplicity can be computed in a combinatorial way by looking at the semigroup defining the toric variety (see 
\cite[Chapter 5, Theorem 3.14]{GKZ}). The proof of this fact, although not too difficult, requires some tools from topology, 
commutative algebra, and combinatorics. The first goal of this note is to give a completely elementary proof of that fact in the case
of toric curves.

Let $S\subset\N$ be a numerical semigroup generated by ${a_1,\ldots,a_r}$, where $a_1<\cdots<a_r$. The corresponding toric 
curve contains the origin and its multiplicity can be easily computed: it is $a_1$. We will prove this fact using only basic concepts
of numerical semigroups. In addition, the proof we present is sufficiently constructive so that a related question can be answered 
along the way.

Recall that for a point in a curve, say $p\in X\subset\K^r$, its multiplicity is defined as the number $m\in\N$ such that 
$\dim_{\K}\mm^{n}/\mm^{n+1}=m$ for all $n\gg0$, where $\mm\subset\K[X]$ denotes the maximal ideal corresponding to $p$.
It is natural to ask for the minimum $k\in\N$ for which the previous statement holds. Some related questions have been studied 
by several authors (see, for instance, \cite{M} or \cite{St2}). The second goal of this note is to describe exactly this minimum for 
plane toric curves and to give effective bounds for this number for arbitrary toric curves.
 

\section{Numerical semigroups and toric curves}

In this section we recall the basic definitions and facts regarding numerical semigroups and toric curves. The notation introduced in 
this section will be used throughout this note.

Let $A=\{a_1,a_2,\ldots,a_r\}\subset\N$ be such that $1<a_1<a_2<\cdots<a_r$ and $\gcd(A)=1$. We denote as 
$S:=\lag A \rag$ the numerical semigroup generated by $A$, and we assume that $A$ is the minimal generating set of $S$
(see \cite{RG} for generalities on numerical semigroups). 

Given a numerical semigroup, we can associate a toric curve. We are interested in the study of some relations among numerical 
invariants of toric curves and data of the corresponding semigroup. Let us recall the general definition of an affine toric variety 
(see, for instance, \cite[Section 1.1]{CLS} or \cite[Chapter 4]{St1}).

Let $A=\{a_1,\ldots,a_r\}\subset\ZZ^d$ be such that $\ZZ A=\{\sum_i \lambda_i a_i|\lambda_i\in\ZZ\}=\ZZ^d$. 
The set $A$ induces a homomorphism of semigroups
\begin{align}\label{e. pi}
\pi_{A}:\N^r\rightarrow\ZZ^d,\mbox{ }\mbox{ }\mbox{ }\alpha=(\alpha_1,\ldots,\alpha_r)
\mapsto \alpha_1 a_1+\cdots+\alpha_r a_r.\notag
\end{align}
Let $\K$ be a field and consider the ideal (by using the usual multi-index notation):
$$I_{A}:=\langle x^{\alpha}-x^{\beta}|\alpha,\beta\in\N^r,\mbox{ }\pi_{A}(\alpha)=\pi_{A}
(\beta)\rangle\subset\Po.$$

\begin{defi}
We call $X_A:=\V(I_A)\subset\K^r$ the toric variety defined by $A$.
\end{defi}

It is well known that a variety obtained in this way is irreducible, contains a dense open set isomorphic 
to $(\K^*)^d$ and the natural action of $(\K^*)^d$ on itself extends to an action on $X_A$. 

Now we introduce a numerical invariant associated to a toric curve. Let $A=\{a_1,\ldots,a_r\}\subset\N$ be as before. 
Let $\mm=\lag t^{a_1},\ldots,t^{a_r} \rag\subset\K[t^{a_1},\ldots,t^{a_r}]=\K[X_A]$ be the maximal ideal corresponding to 
$\underline{0}:=(0,\ldots,0)\in X_A$. For $n\in\N$, we denote 
$$\lb(n):=\dim_{\K}\mm^{n}/\mm^{n+1}.$$
A classical result states that $\lb(n)=m$ for some $m\in\N$, for all $n\gg0$. This number is called the \textit{multiplicity of} 
$X_A$ \textit{in} $\underline{0}$. Another well known result states that $m$ is actually $a_1$ (this is proved in greater generality 
in \cite{L} or \cite[Chapter 5, Theorem 3.14]{GKZ}). In addition, we denote
$$\ri(X_A):=\min\{k\in\N|\lb(n)=a_1\mbox{ for all }n\geq k\}.$$
The number $\ri(X_A)$ is called the \textit{regularity index of the origin of the toric curve} $X_A$. In what follows we give an
elementary proof of the fact $\lb(n)=a_1$ for all $n\gg0$, as well as effective bounds for $\ri(X_A)$.


\section{Multiplicity and regularity index of plane toric curves}

In this section we study the special case of plane toric curves. This case is particularly simple and the regularity index can
be computed explicitly.

\begin{pro}
Let $A=\{a_1,a_2\}$ be as before. Then $\lb(n)<a_1$ if $n<a_1-1$ and $\lb(n)=a_1$ if $n\geq a_1-1$. 
In particular, $\ri(\gr)=a_1-1$.
\end{pro}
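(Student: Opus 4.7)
The plan is to analyze $\mathfrak{m}^n$ explicitly by listing the exponents $s\in S$ for which $t^s\in\mathfrak{m}^n$. Since $\mathfrak{m}=\lag t^{a_1},t^{a_2}\rag$, the ideal $\mathfrak{m}^n$ is spanned, as a $\K$-vector space, by the monomials $t^{ia_1+ja_2}$ with $i,j\geq 0$ and $i+j\geq n$. Consequently $\mathfrak{m}^n/\mathfrak{m}^{n+1}$ has a basis indexed by those $s\in S$ admitting a decomposition $s=ia_1+ja_2$ with $i+j\geq n$, but no such decomposition with $i+j\geq n+1$. The goal is to show that this index set has size $n+1$ when $n<a_1-1$ and size $a_1$ when $n\geq a_1-1$.

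First I would introduce, for each $s\in S$, the quantity $\ell(s):=\max\{i+j\mid i,j\geq 0,\ s=ia_1+ja_2\}$, so that the basis of $\mathfrak{m}^n/\mathfrak{m}^{n+1}$ is in bijection with $\{s\in S\mid \ell(s)=n\}$. Hence $\lb(n)=\#\{s\in S\mid \ell(s)=n\}$.

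Next I would describe $\ell(s)$ combinatorially. Because $\gcd(a_1,a_2)=1$, any two representations $s=ia_1+ja_2=i'a_1+j'a_2$ differ by $(i',j')=(i+ka_2,j-ka_1)$ for some $k\in\ZZ$. Since $a_2>a_1$, increasing $k$ increases $i+j$ by $k(a_2-a_1)$, so the maximum is attained at the unique representation whose $j$-coordinate lies in $\{0,1,\ldots,a_1-1\}$. Thus $\ell(s)=i+j$ where $0\leq j\leq a_1-1$ is uniquely determined by $s$.

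Finally, for a fixed $n$, counting $s\in S$ with $\ell(s)=n$ reduces to counting pairs $(i,j)$ with $i,j\geq 0$, $i+j=n$, and $0\leq j\leq a_1-1$; distinct such pairs give distinct values of $s=ia_1+ja_2$ by the same $\gcd$ argument. This count equals $n+1$ when $n\leq a_1-1$ and equals $a_1$ when $n\geq a_1-1$, which yields exactly the statement, together with $\ri(\gr)=a_1-1$. The only delicate point is the uniqueness of the maximal representation via $\gcd(a_1,a_2)=1$; everything else is a direct count.
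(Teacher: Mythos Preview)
Your argument is correct and lands on the same basis as the paper, namely $\{t^{a_1(n-j)+a_2 j}+\mm^{n+1}\mid 0\leq j\leq a_1-1\}$ for $n\geq a_1-1$. The organization differs: the paper verifies that this set generates via a division-algorithm rewrite (writing $j=qa_1+r$) and then proves linear independence and distinctness through a three-case analysis on the sign of $\beta_1-n+j$, never explicitly invoking $\gcd(a_1,a_2)=1$. Your introduction of $\ell(s)$ together with the parametrization of all integer solutions of $s=ia_1+ja_2$ coming from $\gcd(a_1,a_2)=1$ replaces that case work with a single structural observation, and as a side benefit yields the exact value $\lb(n)=n+1$ for $n<a_1-1$, whereas the paper only records the inequality $\lb(n)<a_1$. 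One point worth spelling out in a full write-up: the maximal representation with $0\leq j\leq a_1-1$ and $i\geq 0$ actually exists for every $s\in S$, since from any nonnegative representation one may repeatedly pass from $(i,j)$ to $(i+a_2,j-a_1)$ without making $i$ negative.
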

\begin{proof}
Let $\mm=\lag t^{a_1},t^{a_2}\rag\subset\K[X_A]$. For $n\geq1$, a $\K$-basis for $\mm^n$ is the set
$\{t^{(a_1, a_2) \cdot \alpha }\mid  \alpha\in \N^2, |\alpha|\geq n\}$, where $(a_1, a_2) \cdot \alpha$ denotes
the standard dot product. Thus, a generating set for $\mm^n/\mm^{n+1}$, as $\K$-vector space, is given by 
$\{ t^{(a_1, a_2)\cdot \alpha}+\mm^{n+1}\mid\alpha\in\N^2,|\alpha|=n\}.$ Let $c$ be 
the cardinality of this set. If $n<a_1-1$ then $\lb(n)\leq c\leq n+1<a_1-1+1=a_1$. This shows the first statement of the proposition.

Now assume that $n\geq a_1-1$. We claim that 
$$B_n:=\{t^{(a_1,a_2)\cdot(n-j,j)}+ \mm^{n+1} \mid j\in \{0,\ldots,a_1-1\} \}$$
is a $\K$-basis for $\mm^n/\mm^{n+1}$. To see that it is a generating set it is enough to show that 
$t^{(a_1,a_2)\cdot(n-j,j)}\in\mm^{n+1}$ if $j\geq a_1$. Let $j=qa_1+r$, where $1\leq q$ and $0\leq r<a_1$. Then,
since $n+(a_2-a_1)q\geq n+1$ we conclude
$$t^{a_1(n-j)+a_2j}=t^{a_1(n-qa_1-r)+a_2(qa_1+r)}=t^{a_1(n+(a_2-a_1)q-r)+a_2r}\in\mm^{n+1}.$$ 

Now suppose that $\sum_{j=0}^{a_1-1}\lb_jt^{(a_1,a_2)\cdot(n-j,j)}=\sum_{|\beta|\geq n+1}b_{\beta}t^{(a_1,a_2)\cdot\beta}$, 
for some $\lambda_i,b_{\beta}\in\K$. Let us define $B'_n:= \{a_1(n-j)+a_2j|j\in\{0,\ldots,a_1-1\}\}.$ We claim that
$\{(a_1,a_2) \cdot \beta \mid |\beta| \geq n+1\} \cap B'_n= \emptyset.$ This implies that $\lb_i=b_{\beta}=0$ for all $i$ and all $\beta$,
i.e., $B_n$ is linearly independent.

Let us prove the claim.
Suppose that $a_1\beta_1+a_2\beta_2=a_1(n-j)+a_2j,$ where $n+1 \leq \beta_1+\beta_2,$ $0\leq \beta_1,$ $0\leq \beta_2$ and 
$0 \leq j\leq a_1-1.$ Then $a_1(\beta_1-n+j)=a_2(j-\beta_2).$ In particular, $a_1 \mid j-\beta_2$. If $\beta_1-n+j=0,$ then 
$j- \beta_2=0$ implying $\beta_1+ \beta_2=n$, which is a contradiction. Suppose $\beta_1-n+j >0$. Then $j-\beta_2 >0.$ 
Since $a_1 \mid j-\beta_2,$ we have $j-\beta_2 \geq a_1$. Thus $\beta_2\leq j-a_1\leq -1$, a contradiction. Finally, suppose 
$\beta_1-n+j <0$. Then $0 \leq \beta_1 <n-j$. Let $\beta_1=n-j-l$ for some $l\in\{ 1, \dots, n-j\}$. Then 
$\beta_2 \geq n+1-\beta_1=j+l+1$ and so we obtain the following contradiction:
$$a_1(n-j)+a_2j<a_1(n-j-l)+a_2(j+l)<a_1(n-j-l)+a_2\beta_2=a_1\beta_1+a_2\beta_2.$$
These contradictions prove the claim. Finally, the same claim also implies that $\lb(n)=|B_n|=a_1$.
\end{proof}


\section{Multiplicity and bounds for the regularity index of toric curves}

In this section we show that the multiplicity of the origin of a toric curve coincides with the smallest non-zero element of the corresponding
semigroup. In addition, we give a bound for its regularity index. The bound is given in terms of the so-called Frobenius number, which we
now define.

Let $A=\{a_1,\ldots,a_r\}\subset\N$ be as before, and let $S=\lag A \rag$. It is well known that any sufficiently large integer belongs 
to $S$. The Frobenius number of $S$, denoted as $F(S)$, is the largest integer that does not belong to $S$.

Let $a=(a_1, \dots, a_r)\in\N^r$ and $\alpha=(\alpha_1, \dots, \alpha_r ) \in \N^r.$ As before, $a\cdot\alpha$ denotes the usual 
dot product. Let $n\in\N$, $n\geq1$, and consider the following sets:
$$\Sn:= \{0\}\cup\{a\cdot\alpha\mid\alpha\in\N^r, |\alpha|\geq n\}.$$
Notice that $\Sn$ is a numerical semigroup and $S=S^{(1)} \supset S^{(2)} \supset \cdots$.

In all that follows we assume $a_1<a_2<F(S)$. A brief discussion of the cases $F(S)<a_1$ and $a_1<F(S)<a_2$ will be given
at the end of this section.

\begin{lem}\label{division}
Let $S=\lag a_1,\ldots,a_r\rag$ be as before. Let $\delta=F(S)-a_1$, $\epsilon=a_2-a_1$, and $\delta=q\epsilon+\tau$, where
$1\leq q$ and $0\leq\tau<\epsilon.$ Then, for every $n\geq q$:
$$F(\Snu)=F(\Sn)+a_1<(n+1)a_2.$$
\end{lem}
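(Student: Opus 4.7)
The plan is to reduce everything to a single preliminary estimate on $F(\Sn)$ and then derive both the recurrence $F(\Snu) = F(\Sn) + a_1$ and the bound $F(\Snu) < (n+1)a_2$ from it. The preliminary estimate I would first establish is: for every $n \geq q$, one has $F(\Sn) < (n+1)a_2 - a_1$. To prove it, I would show that any integer $m \geq (n+1)a_2 - a_1$ lies in $\Sn$. The computation $m - (n-1)a_1 \geq a_1 + (n+1)\epsilon$ is the key: since $\tau < \epsilon$, the hypothesis $n \geq q$ gives $(n+1)\epsilon \geq (q+1)\epsilon > q\epsilon + \tau = \delta$, so $m - (n-1)a_1 > a_1 + \delta = F(S)$. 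Hence $m - (n-1)a_1 \in S$, and reinserting the $n-1$ extra copies of $a_1$ into any representation of it produces a representation of $m$ with at least $n$ summands, placing $m$ in $\Sn$.

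Next, I would prove the equality $F(\Snu) = F(\Sn) + a_1$ in two steps. The direction $F(\Snu) \leq F(\Sn) + a_1$ is the easy one and needs no hypothesis on $n$: if $m > F(\Sn) + a_1$, then $m - a_1 > F(\Sn)$, so $m - a_1 \in \Sn$, and tacking on one more copy of $a_1$ yields a representation of $m$ with at least $n+1$ summands, so $m \in \Snu$. For the reverse direction I would show $F(\Sn) + a_1 \notin \Snu$. Assuming for contradiction a representation $F(\Sn) + a_1 = \sum \alpha_i a_i$ with $\sum \alpha_i \geq n+1$, two cases arise: if $\alpha_1 \geq 1$, removing one $a_1$ exhibits $F(\Sn)$ as a sum of at least $n$ elements of $A$, contradicting $F(\Sn) \notin \Sn$; if $\alpha_1 = 0$, then the sum involves only $a_2, \ldots, a_r$, forcing $F(\Sn) + a_1 \geq (n+1)a_2$, which contradicts the preliminary estimate.

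Finally, the inequality $F(\Snu) < (n+1)a_2$ follows at once from the equality and the preliminary bound, since $F(\Snu) = F(\Sn) + a_1 < (n+1)a_2$. I expect the main obstacle to be isolating the correct preliminary estimate and checking that the arithmetic condition $n \geq q$ translates precisely into the inequality $(n+1)\epsilon > \delta$ that pushes $m - (n-1)a_1$ past the Frobenius number of $S$; once that calibration is set, the rest is careful bookkeeping on the number of summands in representations by elements of $A$.
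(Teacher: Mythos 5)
Your proof is correct and follows essentially the same route as the paper: the equality $F(\Snu)=F(\Sn)+a_1$ is obtained by exactly the same two-case analysis on $\alpha_1$, and the required bound rests on the same arithmetic fact that $n\geq q$ forces $\delta<(n+1)\epsilon$. The only difference is cosmetic: you prove $F(\Sn)<(n+1)a_2-a_1$ directly (subtract $(n-1)a_1$, land above $F(S)$, re-add the copies of $a_1$), whereas the paper reaches the equivalent bound $F(\Sn)+a_1\leq F(S)+na_1<(n+1)a_2$ by iterating the easy inequality $F(S^{(k+1)})\leq F(S^{(k)})+a_1$.
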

\begin{proof}
First notice that $F(\Sn)+a_1+l\in\Snu$ for all $l\geq1$ and so $F(\Snu)\leq F(\Sn)+a_1$ for all $n\geq1$. Thus, for any given $n$,
$$F(\Snu)\leq F(\Sn)+a_1\leq F(S^{(n-1)})+2a_1\leq\cdots\leq F(S)+na_1.$$
Let $n\geq q$. From the equation $\delta=q\epsilon+\tau$ we obtain 
\begin{align}
F(S)+na_1&=q(a_2-a_1)+\tau+(n+1)a_1\notag\\
&<q(a_2-a_1)+a_2-a_1+(n+1)a_1\notag\\
&=(q+1)a_2+(n-q)a_1\notag\\
&<(q+1)a_2+(n-q)a_2=(n+1)a_2.\notag
\end{align}
We conclude that $F(\Snu)\leq F(\Sn)+a_1<(n+1)a_2$ for all $n\geq q$. It remains to prove that $F(\Snu)=F(\Sn)+a_1$ for $n\geq q$.
Suppose that $F(\Sn)+a_1=a\cdot\alpha\in\Snu$, i.e., $|\alpha|=n+1$. If $\alpha_1\geq1$ then $F(\Sn)=a\cdot\alpha-a_1\in\Sn$, 
which is a contradiction. If $\alpha_1=0$ then $F(\Sn)+a_1=\sum_{i=2}^ra_i\alpha_i\geq(n+1)a_2$, a contradiction.
Therefore, $F(\Sn)+a_1\notin\Snu$ and so $F(\Snu)=F(\Sn)+a_1$ for all $n\geq q$.
\end{proof}

\begin{pro}\label{S(n+1)=S(n)+a_1}
Let $S= \lag a_1, \dots, a_r  \rag$. Then $S^{(n+1)}=S^{(n)}+a_1$ for every $n\geq q$.
\end{pro}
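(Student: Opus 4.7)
My plan is to prove the equality $\Snu=\Sn+a_1$ by double inclusion, understood away from the explicit zero element (since $0\in\Sn$ by definition while $0+a_1=a_1\notin\Snu$ for $n\geq 1$). The forward inclusion $\Sn+a_1\subseteq\Snu$ is purely formal and does not require $n\geq q$: a nonzero element $s=a\cdot\alpha\in\Sn$ with $|\alpha|\geq n$ satisfies $s+a_1=a\cdot(\alpha+e_1)$ with $|\alpha+e_1|\geq n+1$, so $s+a_1\in\Snu$; here $e_1=(1,0,\ldots,0)\in\N^r$.

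For the reverse inclusion I would fix a nonzero $t\in\Snu$, write $t=a\cdot\alpha$ with $|\alpha|\geq n+1$, and branch on whether $t$ admits any such representation with $\alpha_1\geq 1$. In the easy branch I set $\alpha=e_1+\beta$ and obtain $t=a_1+a\cdot\beta$ with $|\beta|\geq n\geq q\geq 1$, so $a\cdot\beta$ is automatically a nonzero element of $\Sn$ and the decomposition $t=(a\cdot\beta)+a_1$ is the one required.

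The genuine obstacle, and the only place where the hypothesis $n\geq q$ is actually used, is the complementary case in which every representation of $t$ forces $\alpha_1=0$: here there is no $a_1$ to detach syntactically and some numerical input is needed. My intended workaround is to invoke Lemma~\ref{division}: since $\alpha_1=0$ one has $t=\sum_{i\geq 2}a_i\alpha_i\geq(n+1)a_2$, while the lemma supplies the two inequalities $(n+1)a_2>F(\Snu)$ and $F(\Snu)=F(\Sn)+a_1$. Chaining them yields $t-a_1>F(\Sn)$, and by the defining property of the Frobenius number this places $t-a_1$ inside $\Sn$; hence $t=(t-a_1)+a_1$ closes the reverse inclusion and finishes the proof. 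The only conceptual care needed is this Frobenius-number bookkeeping; the rest of the argument is combinatorial shuffling of the representations $a\cdot\alpha$.
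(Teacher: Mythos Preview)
Your proof is correct and uses the same key input as the paper's, namely Lemma~\ref{division} (both the identity $F(\Snu)=F(\Sn)+a_1$ and the bound $F(\Snu)<(n+1)a_2$). The organization differs slightly: the paper first splits on whether $s>F(\Snu)$ or $s<F(\Snu)$ and only in the second case looks at $\alpha_1$, deriving a contradiction when $\alpha_1=0$; you instead split directly on $\alpha_1$ and, in the $\alpha_1=0$ branch, use the chain $t\geq(n+1)a_2>F(\Snu)=F(\Sn)+a_1$ to place $t-a_1$ beyond $F(\Sn)$. Your route is marginally more streamlined (two cases rather than three), and your explicit remark that the equality should be read away from $0$ is a point the paper glosses over: as written, $0\in\Sn$ gives $a_1\in\Sn+a_1$, yet $a_1\notin\Snu$ for $n\geq 1$, so the inclusion $\Sn+a_1\subset\Snu$ is literally true only on nonzero elements.
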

\begin{proof}
By definition, $\Sn+a_1\subset\Snu$, for every $n\geq1$. Assume that $n\geq q$ and let $s \in S^{(n+1)}$. Suppose $s>F(\Snu)$.
By lemma \ref{division}, $F(\Snu)=F(\Sn)+a_1$ and so, for some $l\geq1$, $s=F(\Sn)+a_1+l=(F(\Sn)+l)+a_1\in\Sn+a_1$.

Now suppose that $s<F(\Snu)$. Let $s=a\cdot\alpha$, where $|\alpha|\geq n+1$. If 
$\alpha_1\geq1$ then $s=(a\cdot\alpha-a_1)+a_1\in\Sn+a_1$. If $\alpha_1=0$ then $s=\sum_{i=2}^{r}a_i\alpha_i\geq(n+1)a_2$.
On the other hand, by lemma \ref{division}, $s<F(\Snu)<(n+1)a_2$, which is a contradiction. We conclude that $\Snu\subset\Sn+a_1$.
\end{proof}

Now we are ready to prove our main theorem.

\begin{teo}\label{main}
Let $A=\{a_1,\ldots, a_r\}$ be as before and $S=\lag A \rag$. Let $\delta=F(S)-a_1$, $\epsilon=a_2-a_1$, and $\delta=q\epsilon+\tau$, 
where $1\leq q$ and $0\leq\tau<\epsilon.$ Then $\lb(n)=a_1$ for every $n\geq q$. In particular, $\ri(\gr)\leq q$.
\end{teo}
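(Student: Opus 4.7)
The plan is to reduce $\lb(n)$ to a combinatorial count inside $\Sn$, use Proposition~\ref{S(n+1)=S(n)+a_1} to rewrite $\Snu$ as an $a_1$-shift of $\Sn$, and finish with a residue-class argument modulo $a_1$.

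A $\K$-basis of $\mm^n$ is $\{t^s\mid s\in\Sn\setminus\{0\}\}$, so $\lb(n)=|\Sn\setminus\Snu|$. Since $0\in\Sn\cap\Snu$, we may restrict to $s\neq 0$. Proposition~\ref{S(n+1)=S(n)+a_1} identifies the nonzero elements of $\Snu$ with $(\Sn\setminus\{0\})+a_1$ for $n\geq q$; hence $s\in\Sn\setminus\Snu$ if and only if $s\in\Sn\setminus\{0\}$ and $s-a_1\notin\Sn\setminus\{0\}$.

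By Lemma~\ref{division} the semigroup $\Sn$ has finite complement in $\N$, so every residue class modulo $a_1$ meets $\Sn\setminus\{0\}$ in infinitely many elements. For $i\in\{0,1,\dots,a_1-1\}$ I would define
\[
m_i:=\min\{s\in\Sn\setminus\{0\}\mid s\equiv i\pmod{a_1}\}
\]
and prove the equality $\Sn\setminus\Snu=\{m_0,m_1,\dots,m_{a_1-1}\}$. For the inclusion $\supseteq$: by minimality of $m_i$, there is no smaller nonzero element of $\Sn$ in the same residue class, so $m_i-a_1$ is either negative, equal to $0$, or a non-element of $\Sn$; in every case $m_i-a_1\notin\Sn\setminus\{0\}$, so $m_i\in\Sn\setminus\Snu$. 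For the reverse inclusion, given $s\in\Sn\setminus\Snu$ with $i=s\bmod a_1$, write $s=m_i+ka_1$ with $k\geq 0$; the closure property $(\Sn\setminus\{0\})+a_1\subseteq\Sn\setminus\{0\}$, immediate from the definition of $\Sn$, implies that if $k\geq 1$ then $s-a_1=m_i+(k-1)a_1\in\Sn\setminus\{0\}$, forcing $s\in\Snu$, a contradiction; hence $k=0$ and $s=m_i$.

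Since the $m_i$ lie in pairwise distinct residue classes they are pairwise distinct, so $\lb(n)=a_1$ for every $n\geq q$ and $\ri(\gr)\leq q$ follows at once. The conceptual heart of the argument is Proposition~\ref{S(n+1)=S(n)+a_1}; after that, the residue-class bookkeeping is routine, the only mild subtlety being the boundary case $m_0-a_1=0$ (which is harmless since $0\notin\Sn\setminus\{0\}$).
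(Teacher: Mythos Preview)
Your proof is correct. Both your argument and the paper's hinge on Proposition~\ref{S(n+1)=S(n)+a_1} and on the identification $\lb(n)=|\Sn\setminus\Snu|$, so the overall approach is the same; the difference is only in how the set $\Sn\setminus\Snu$ is parametrized. The paper writes it as $A\cup B$, where $A$ collects the elements of $\Sn$ below $(n+1)a_1$ and $B$ collects, for each gap $h<(n+1)a_1$ with $h\notin\Sn$, the minimal lift $h+ka_1$ into $\Sn$; it then asserts $|A\cup B|=a_1$ without further justification. You instead identify $\Sn\setminus\Snu$ directly with the Ap\'ery-type set $\{m_0,\dots,m_{a_1-1}\}$ of minimal positive residue-class representatives in $\Sn$. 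This is the same combinatorics repackaged: unwinding the paper's $A\cup B$ one finds exactly your $m_i$'s. Your version has the advantage that the cardinality $a_1$ is immediate, and your explicit handling of $0$ (working with $\Sn\setminus\{0\}$ throughout) is more careful than the paper's, whose set $A$ as written contains $0$ even though $0\in\Snu$.
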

\begin{proof}
Let $\mm= \lag t^{a_1},\ldots,t^{a_r}\rag\subset \K[X_A]$ be the maximal ideal corresponding to $\underline{0}\in X_A$. 
A $\K$-basis for $\mm^n$ is given by $\{t^\gamma\mid\gamma\in S^{(n)}\}.$ In particular, a $\K$-basis for $\mm^n/ \mm^{n+1}$ 
is given by $\{t^{\gamma}+\mm^{n+1} \mid \gamma \in S^{(n)}\backslash S^{(n+1)} \}.$ We claim that 
$|S^{(n)}\backslash S^{(n+1)}|=a_1$ for every $n\geq q$.

Consider the following sets:
\begin{align}
A&:= \{ s \in S^{(n)} \mid s<(n+1)a_1 \},\notag\\
B&:= \{ h+ka_1 \mid h<(n+1)a_1, h \notin S^{(n)}, \medspace \text{and} \medspace k:= \min \{ l \mid h+la_1 \in S^{(n)}\} \}.\notag
\end{align}
Let us prove that $S^{(n)} \backslash  S^{(n+1)}= A \cup B$, for $n\geq q$. This proves the theorem since $|A\cup B|=a_1$.

Let $s\in A \cup B$. If $s \in A$, then $s\in S^{(n)}$ and $s<(n+1)a_1$. Thus, $s\in S^{(n)} \backslash  S^{(n+1)}$.
If $s\in B,$ then $s=h+ka_1$, where $h<(n+1)a_1, h \notin S^{(n)}$, and $k=\min \{ l \mid h+ la_1 \in S^{(n)}\}$. 
Suppose that $s \in S^{(n+1)}$. By proposition \ref{S(n+1)=S(n)+a_1} we have that $h+ka_1=t+a_1$, for some $t \in S^{(n)}$. 
Then $h+(k-1)a_1=t$, which contradicts the minimality of $k.$ Therefore, $s\in S^{(n)} \backslash  S^{(n+1)}.$

Now let $s \in S^{(n)} \backslash  S^{(n+1)}$. If $s<(n+1)a_1$ then $s\in A$. If $s>(n+1)a_1$, let $s=pa_1+t$, where 
$0\leq t < a_1$ and $n+1\leq p$. Notice that $t=0$ implies $s=pa_1\in S^{(p)} \subset S^{(n+1)}$, a contradiction. Thus, 
$0<t<a_1.$ We claim that $t+na_1\notin S^{(n)}.$  Suppose that $t+na_1=a\cdot\alpha\in S^{(n)},$ i.e., $|\alpha|\geq n$. Then 
$s+na_1=pa_1+t+na_1=pa_1+a\cdot\alpha$. Since $p-n \geq 1,$ we have $s=a\cdot\alpha+(p-n)a_1\in S^{(n+1)}$, 
contradicting again that $s \in S^{(n)} \backslash  S^{(n+1)}.$ This proves the claim.  

Let $h:= t+na_1$ and $k:=p-n.$ Then $s=h+ka_1.$ Suppose that $j<k$ is such that $h+ja_1 \in S^{(n)},$ i.e., 
$h+ja_1=a\cdot\beta$, where $|\beta|\geq n.$ Let $k=j+l$ for some $l\geq 1.$ Then 
$s= h+(j+l)a_1  =h+ja_1+la_1 =a\cdot\beta+la_1\in S^{(n+1)}$, a contradiction. Therefore $k$ is the minimal element such that
$h+ka_1 \in S^{(n)}$ and so $s\in B$.
\end{proof}

In the following example we show that the bound for the regularity index given in theorem \ref{main} is sharp.
\begin{exam}
Let $a_1\geq3$ and $S=\lag a_1,2a_1+1,2a_1+2,\ldots,2a_1+(a_1-2)\rag$. In this example, $F(S)=4a_1-1$ and $q=2$. 
Since the generating set of $S$ has cardinality $a_1-1$, we have $\lb(1)=a_1-1$. Therefore, $\ri(\gr)=2=q$.
\end{exam}

The bound given in theorem \ref{main} is given in terms of the Frobenius number of the semigroup. Unfortunately, it is well-known
that this number cannot be explicitly computed in general. However, there are some known bounds for it.

\begin{coro}
With the notation of the theorem, 
$$\ri(\gr)\leq \frac{1}{a_2-a_1}\Big(a_1a_r-2a_1-a_r-\tau\Big).$$
\end{coro}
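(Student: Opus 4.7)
The plan is to combine Theorem~\ref{main} with the classical Sylvester--Frobenius estimate. Solving the division $\delta = q\epsilon + \tau$ for $q$ gives
$$q = \frac{F(S) - a_1 - \tau}{a_2 - a_1},$$
and Theorem~\ref{main} says $\ri(\gr) \le q$. So it suffices to show $F(S) \le a_1 a_r - a_1 - a_r$; substituting this bound into the formula for $q$ yields the corollary immediately.

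For the Frobenius estimate I would invoke Sylvester's classical formula. When $\gcd(a_1, a_r) = 1$, the two-generator numerical semigroup $\lag a_1, a_r\rag$ has Frobenius number exactly $a_1 a_r - a_1 - a_r$. Since $\lag a_1, a_r\rag \subseteq S$, any positive integer missing from $S$ is also missing from $\lag a_1, a_r\rag$, and therefore $F(S) \le a_1 a_r - a_1 - a_r$.

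The only subtle point is the case $\gcd(a_1, a_r) > 1$. Since $\gcd(A) = 1$, there must exist some index $j \in \{2,\ldots,r\}$ with $\gcd(a_1, a_j) = 1$. I would apply Sylvester's formula to $\lag a_1, a_j\rag \subseteq S$ to obtain $F(S) \le a_1 a_j - a_1 - a_j$, and then the inequality $(a_1 - 1)(a_j - a_r) \le 0$ (which follows from $a_1 \ge 2$ and $a_j \le a_r$) yields $a_1 a_j - a_1 - a_j \le a_1 a_r - a_1 - a_r$, so the bound still holds. This gcd case is the only even minor obstacle; the rest is a one-line substitution.
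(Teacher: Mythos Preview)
Your overall strategy is exactly the paper's: the authors simply invoke Theorem~\ref{main} together with the bound $F(S)\le a_1a_r-a_1-a_r$, which they cite from Brauer~\cite{B} without reproving it. The substitution $q=(F(S)-a_1-\tau)/(a_2-a_1)$ and the arithmetic are fine.

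The gap is in your justification of the Frobenius bound in the case $\gcd(a_1,a_r)>1$. Your claim that ``since $\gcd(A)=1$, there must exist some index $j\in\{2,\ldots,r\}$ with $\gcd(a_1,a_j)=1$'' is false. Take $A=\{6,10,15\}$: here $\gcd(A)=1$, yet $\gcd(6,10)=2$ and $\gcd(6,15)=3$, so no $a_j$ is coprime to $a_1$. The inequality $F(S)\le a_1a_r-a_1-a_r$ does hold for this example (one computes $F(\langle 6,10,15\rangle)=29\le 69$) and indeed in general, but it needs Brauer's argument (or an equivalent), not merely Sylvester's two-generator formula applied to some pair $(a_1,a_j)$. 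Either cite \cite{B} as the paper does, or supply a correct argument covering the non-coprime case.
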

\begin{proof}
It follows from the theorem and the fact $F(S)\leq a_1a_r-a_1-a_r$ (see \cite{B}).
\end{proof}

Throughout this section, we considered the case $a_1<a_2<F(S)$. Let us conclude with a discussion on the two other special cases:
$F(S)<a_1$ and $a_1<F(S)<a_2$.

\begin{rem}
Suppose that $F(S)=a_1-1$, i.e., $S=\{0,a_1,a_1+1,\ldots\}$. We already checked that $\Sn+a_1\subset\Snu$ for all $n\geq1$. 
In addition, since $\min\Snu\setminus\{0\}=(n+1)a_1$, it follows that $\Snu=\Sn+a_1$ for all $n\geq1$. Following the proof of 
theorem \ref{main} we obtain that $\lb(n)=a_1$ for all $n\geq1$, i.e., $\ri(\gr)=1$.
\end{rem}

\begin{rem}
Suppose that $a_1<F(S)<a_2$. First notice that $F(\Sn)\leq F(S^{(n-1)})+a_1\leq\cdots\leq F(S)+(n-1)a_1<a_2+(n-1)a_1<na_2$, 
for every $n\geq1$. In addition, the last paragraph of the proof of lemma \ref{division} also applies to show that $F(\Snu)=F(\Sn)+a_1$,
for all $n\geq1$. These facts were the main ingredients to prove that $\Snu=\Sn+a_1$. As in the previous remark, we conclude 
that $\lb(n)=a_1$ for all $n\geq1$, i.e., $\ri(\gr)=1$.
\end{rem}

\section*{Acknowledgements}

We want to thank Enrique Ch\'avez for suggesting us an idea to conclude the proof of theorem \ref{main}. We also thank
Mario Huicochea for explaining us some results on sumsets and for the reference \cite{B}. Finally, we thank Omar Antol\'in for
stimulating discussions on the topic of this note.

\vspace{.5cm}
{\footnotesize \textsc {D. Duarte, Universidad Aut\'onoma de Zacatecas-CONACYT.} \\
E-mail: aduarte@uaz.edu.mx}\\
{\footnotesize \textsc {A. Ram\'irez-Sandoval, Universidad Aut\'onoma de Zacatecas.} \\
E-mail: acrmz96@gmail.com}

\end{document}